\documentclass[12pt]{article}
\usepackage{graphicx}
\usepackage{amsmath,amsthm,amssymb,enumerate}
\textwidth 18 cm \hoffset - 23 mm
\usepackage{color}
\usepackage[utf8]{inputenc}
\catcode`\@=11 \@addtoreset{equation}{section}

\catcode`\@=12

\newtheorem{Theorem}{Theorem}[section]
\newtheorem{Proposition}{Proposition}[section]
\newtheorem{Lemma}{Lemma}[section]
\newtheorem{Corollary}{Corollary}[section]
\newtheorem{Remark}{Remark}[section]
\newtheorem{Definition}{Definition}[section]

\newcommand{\bTheorem}[1]{
\begin{Theorem} \label{T#1} }
\newcommand{\eT}{\end{Theorem}}

\newcommand{\bProposition}[1]{
\begin{Proposition} \label{P#1}}
\newcommand{\eP}{\end{Proposition}}

\newcommand{\bLemma}[1]{
\begin{Lemma} \label{L#1} }
\newcommand{\eL}{\end{Lemma}}

\newcommand{\bCorollary}[1]{
\begin{Corollary} \label{C#1} }
\newcommand{\eC}{\end{Corollary}}

\newcommand{\bRemark}[1]{
\begin{Remark} \label{R#1} }
\newcommand{\eR}{\end{Remark}}

\newcommand{\bDefinition}[1]{
\begin{Definition} \label{D#1} }
\newcommand{\eD}{\end{Definition}}

\newcommand{\bFormula}[1]{ \begin{equation} \label{#1} }
\newcommand{\eF}{ \end{equation} }

\newcommand{\Ov}[1]{\overline{#1}}

\newcommand{\vr}{\varrho}

\newcommand{\vu}{\vc{u}}
\newcommand{\vc}[1]{{\bf #1}}

\newcommand{\Div}{{\rm div}_x}
\newcommand{\Grad}{\nabla_x}

\newcommand{\tn}[1]{\mathbb{ #1 }}
\newcommand{\dx}{{\rm d} {x}}
\newcommand{\dt}{{\rm d} t }

\newcommand{\dxdt}{\dx \ \dt}
\newcommand{\intO}[1]{\int_{\Omega} #1 \ \dx}

\newcommand{\bProof}{{\bf Proof: }}

\definecolor{Cgrey}{rgb}{0.85,0.85,0.85}
\definecolor{Cblue}{rgb}{0.50,0.85,0.85}
\definecolor{Cred}{rgb}{1,0,0}
\definecolor{fancy}{rgb}{0.10,0.85,0.10}

\newcommand\Cbox[2]{%
    \newbox\contentbox%
    \newbox\bkgdbox%
    \setbox\contentbox\hbox to \hsize{%
        \vtop{
            \kern\columnsep
            \hbox to \hsize{%
                \kern\columnsep%
                \advance\hsize by -2\columnsep%
                \setlength{\textwidth}{\hsize}%
                \vbox{
                    \parskip=\baselineskip
                    \parindent=0bp
                    #2
                }%
                \kern\columnsep%
            }%
            \kern\columnsep%
        }%
    }%
    \setbox\bkgdbox\vbox{
        \color{#1}
        \hrule width  \wd\contentbox %
               height \ht\contentbox %
               depth  \dp\contentbox
        \color{black}
    }%
    \wd\bkgdbox=0bp%
    \vbox{\hbox to \hsize{\box\bkgdbox\box\contentbox}}%
    \vskip\baselineskip%
}


\date{}


\begin{document}


\title{Dissipative measure-valued solutions to the compressible Navier-Stokes system}

\author{Eduard Feireisl \thanks{The research of E.F.\ leading to these results has received funding from the European Research Council under the European Union's Seventh Framework Programme (FP7/2007-2013)/ ERC Grant Agreement 320078. The Institute
of Mathematics of the Academy of Sciences of the Czech Republic is supported by RVO:67985840. }\and Piotr Gwiazda\thanks{ The research of A.\'S-G  and P.G. has received funding from the  National Science Centre, Poland, 2014/13/B/ST1/03094} \and Agnieszka \' Swierczewska-Gwiazda \and Emil Wiedemann}

\maketitle

\centerline{Institute of Mathematics of the Academy of Sciences of the Czech Republic}

\centerline{\v Zitn\' a 25, CZ-115 67 Praha 1, Czech Republic}

\bigskip

\centerline{Institute of Applied Mathematics and Mechanics, University of Warsaw}

\centerline{Banacha 2, 02-097 Warszawa, Poland}

\bigskip

\centerline{Institute of Applied Mathematics and Mechanics, University of Warsaw}

\centerline{Banacha 2, 02-097 Warszawa, Poland}

\bigskip

\centerline{Hausdorff Center for Mathematics and Mathematical Institute, University of Bonn}

\centerline{Endenicher Allee 60, 53115 Bonn, Germany}

\bigskip






\maketitle

\bigskip





\begin{abstract}

We introduce a new concept of dissipative measure-valued solution to the compressible Navier-Stokes system satisfying, in addition, a relevant form of the total energy balance. Then we show that a dissipative measure-valued and a standard smooth classical solution originating from the same initial data coincide (weak-strong uniqueness principle) as long as the latter exists. Such a result facilitates considerably the proof of convergence of solutions to various approximations including certain numerical schemes that are known to generate a measure-valued solution. As a byproduct we show that any measure-valued solution with
bounded density component that starts from smooth initial data is necessarily a classical one.

\end{abstract}

{\bf Key words:} {measure-valued solution, compressible Navier-Stokes system, weak-strong uniqueness, stability}



\section{Introduction}
\label{I}

The concept of measure-valued solution to partial differential equations was introduced by DiPerna~\cite{DiP} in the context of conservation laws. He used Young measures in order to conveniently pass to the artificial viscosity limit and in some situations (e.g.\ the scalar case) proved a posteriori that the measure-valued solution is atomic, i.e.\ it is in fact a solution in the sense of distributions.

For general systems of conservation laws there is no hope to obtain (entropy) solutions in the distributional sense and therefore there seems to be no alternative to the use of measure-valued solutions or related concepts. In the realm of inviscid fluid dynamics, the existence of measure-valued solutions has been established for a variety of models~\cite{DiMa, Neustup, Gwi}.

Measure-valued solutions to problems involving \emph{viscous} fluids were introduced in the early nineties in \cite{MNRR} and may seem obsolete nowadays in the light of the theory proposed by P.-L. Lions \cite{LI4} and extended by Feireisl et al.\ \cite{FNP} in the framework of weak solutions for the barotropic Navier-Stokes system
\begin{equation}\label{NS}
\begin{aligned}
\partial_t \vr + \Div (\vr \vu) &=0 \\
 \partial_t (\vr \vu) + \Div (\vr \vu \otimes \vu) + \Grad p(\vr) &= \Div \mathbb{S} (\Grad \vu),
\end{aligned}
\end{equation}
where $\vr$ is the density, $\vu$ the velocity, $p$ the given pressure function, and $\mathbb{S}$ the Newtonian viscous stress.

The reason we consider measure-valued solutions nevertheless is twofold: First, the results of this paper pertain to any adiabatic exponent greater than one, whereas the known existence theory for weak solutions requires $\gamma>3/2$; second, there remains a vast class of approximate problems including systems with higher order viscosities and solutions to certain numerical schemes for which it is rather easy to show that they generate a measure-valued solution whereas convergence to a weak solution is either not
known or difficult to prove. This motivates the present study, where we introduce a new concept of (dissipative) measure valued solution to the system~\eqref{NS}.

The main novelty is that we have to deal with nonlinearities both in the velocity and its derivative, since we need to make sense of the energy inequality
\[
\partial_t \int_{\Omega} \left[ \frac{1}{2} \vr |\vu|^2 + P(\vr) \right]  +
\int_\Omega \left[ \mathbb{S} (\Grad \vu) : \Grad \vu\right]  \leq 0
\]
in the measure-valued framework. Indeed, Neustupa~\cite{Neustup} considered measure-valued solutions of~\eqref{NS}, but his theory does not involve the energy. Young measures do not seem suitable to describe the limit distributions of a map and its gradient simultaneously, as it is unclear how the information that one component of the measure is in some sense the gradient of the other component is encoded in the measure. We solve this issue by introducing a ``dissipation defect" (see Definition \ref{DD1}), which encodes all conceivable concentration effects in the density and the velocity, and concentration \emph{and} oscillation effects in the gradient of the velocity. It then turns out that postulating a Poincar\'e-type inequality (see \eqref{KoPo}), which is satisfied by any measure generated by a reasonable approximating sequence of solutions for~\eqref{NS}, already suffices to ensure weak-strong uniqueness. As a side effect, we thus avoid the notationally somehow heavy framework
  of Alibert and Bouchitt\'e~\cite{AlBo} and give the most extensive definition of dissipative measure-valued solution that still complies with weak-strong uniqueness (cf.\ also the discussion in Section~\ref{dissipative}).

Indeed, the proof of weak-strong uniqueness for our dissipative measure-valued solutions is the main point of this paper (Theorem~\ref{TT1}). Weak-strong uniqueness means that classical solutions are stable within the class of dissipative measure-valued solutions. For the incompressible Navier-Stokes equations, a weak-strong uniqueness principle was shown for the first time in the classical works of Prodi~\cite{Pro} and Serrin~\cite{Ser}. Surprisingly, even in the measure-valued setting, weak-strong uniqueness results have been proved: For the incompressible Euler equations and bounded solutions of conservation laws this was done in~\cite{BrDLSz}, and for the compressible Euler system and a related model for granular flow in~\cite{GwSwWi}. In the context of elastodynamics, dissipative measure-valued solutions and their weak-strong uniqueness property were studied in~\cite{DeStTz}. Here, we give the first instance of weak-strong uniqueness for measure-valued solutions of a viscous fluid model.

We also identify a large class of problems generating dissipative measure-valued solutions including the pressure-density equations of state that are still beyond the reach of the current theory of weak solutions. We make a similar observation for certain numerical schemes, thus adopting the viewpoint of Fjordholm et al.\ \cite{FjKaMiTa}, who argue (in the context of hyperbolic systems of conservation laws) that dissipative measure-valued solutions are a more appropriate solution concept compared to weak entropy solutions, because the former are obtained as limits of common numerical approximations whereas the latter aren't.

As a further application of weak-strong unqiueness, we show (Theorem~\ref{TT2}) that every approximate sequence of solutions of~\eqref{NS} with uniformly bounded density converges to the unique smooth solution.

\section{Definition and existence of dissipative measure-valued solutions}
\subsection{Motivation: Brenner's model in fluid dynamics}
\label{BM}

To motivate our definition of measure-valued solution, we consider a model of a viscous compressible fluid proposed by Brenner \cite{BREN}, where the density
$\vr = \vr(t,x)$ and the velocity $\vu = \vu(t,x)$ satisfy
\begin{eqnarray}
\label{B1}
\partial_t \vr + \Div (\vr \vu) &=& K \Delta \vr \\
\label{B2}  \partial_t (\vr \vu) + \Div (\vr \vu \otimes \vu) + \Grad p(\vr) &=& \Div \mathbb{S} (\Grad \vu)+  K \Div (\vu \otimes \Grad \vr),
\end{eqnarray}
where $K > 0$ is a parameter, and $\mathbb{S}$ the standard Newtonian viscous stress
\begin{equation} \label{B3}
\mathbb{S}(\Grad \vu) = \mu \left( \Grad \vu + \Grad^t \vu - \frac{2}{3} \Div \vu \mathbb{I} \right) + \eta \Div \vu \mathbb{I},\ \mu > 0,\ \eta \geq 0.
\end{equation}
Note that $\mathbb{S}$ depends only on the symmetric part of its argument.
Problem (\ref{B1}--\ref{B3}) may be supplemented by relevant boundary conditions, here
\begin{equation} \label{B4}
\vu|_{\partial \Omega} = 0, \ \Grad \vr \cdot \vc{n}|_{\partial \Omega} = 0,
\end{equation}
where $\Omega \subset R^N$, $N=2,3$ is a regular bounded domain.

In addition, sufficiently smooth solutions of (\ref{B1}--\ref{B4}) obey the total energy balance:
\begin{equation} \label{B5}
\partial_t \intO{ \left[ \frac{1}{2} \vr |\vu|^2 + P(\vr) \right] } +
\intO{ \left[ \tn{S} (\Grad \vu) : \Grad \vu + K P''(\vr) |\Grad \vr|^2 \right]} = 0,
\end{equation}
where $P$ denotes the pressure potential,
\[
P(\vr) = \vr \int_1^\vr \frac{p(z)}{z^2} \ {\rm d}z.
\]

Leaving apart the physical relevance of Brenner's model, discussed and criticized in several studies (see, e.g., \"{O}ttinger, Struchtrup, and Liu \cite{OeStLi}),
we examine the limit of a family of solutions $\{ \vr^K, \vu^K \}_{K > 0}$ for $K \to 0$. Interestingly, system (\ref{B1}--\ref{B3}) is almost identical to the
approximate problem used in \cite{EF70} in the construction of weak solutions to the barotropic Navier-Stokes system, in particular, the existence of $\{ \vr^K, \vu^K \}_{k > 0}$
for a fairly general class of initial data may be established by the method detailed in \cite[Chapter 7]{EF70}. A more general model of a heat-conducting fluid
based on Brenner's ideas has been also analyzed in \cite{FeiVas}.

We suppose that the energy of the initial data is bounded
\[
\intO{ \left[ \frac{1}{2} \vr^K_0 |\vu^K_0|^2 + P(\vr^K_0) \right] } \leq c
\]
uniformly for $K \to 0$. In order to deduce uniform bounds, certain coercivity assumption must be imposed on the pressure term:
\begin{equation} \label{BB6}
p \in C[0, \infty) \cap C^2(0, \infty), \ p(0) = 0, \ p'(\vr) > 0 \ \mbox{for}\ \vr > 0, \ \liminf_{\vr \to \infty} p'(\vr) > 0,\
\liminf_{\vr \to \infty} \frac{P(\vr)}{p(\vr)} > 0.
\end{equation}
Seeing that $P''(\vr) = p'(\vr)/\vr$ we deduce from the energy balance (\ref{B5}) the following bounds
\begin{equation} \label{BB7}
\begin{split}
\sup_{\tau \in [0,T]} \intO{ P(\vr^K) (\tau, \cdot) } \leq c \ & \Rightarrow \sup_{\tau \in [0,T]} \intO{ \vr^K \log(\vr^K) (\tau, \cdot) } \leq c, \\
\sup_{\tau \in [0,T]} \intO{ \vr^K |\vu^K |^2 (\tau, \cdot) } & \leq c, \\
\int_0^T \intO{ \mathbb{S}(\Grad \vu^K) : \Grad \vu^K  } \leq c \ & \Rightarrow \ \mbox{(Korn inequality)}\ \int_0^T \intO{ | \Grad \vu^K |^2  } \leq c \\
& \Rightarrow \ \mbox{(Poincar\' e inequality)}\ \int_0^T \intO{ | \vu^K |^2  } \leq c,\\
K \int_0^T \intO{ \frac{p'(\vr^K)}{\vr^K} |\Grad \vr^K |^2 } & \leq c
\end{split}
\end{equation}
uniformly for $K \to 0$.

Now, system (\ref{B1}, \ref{B2}) can be written in the weak form
\begin{equation} \label{wB1}
\left[ \intO{ \vr^K \psi} \right]_{t = 0}^{t = \tau} =
\int_0^\tau \intO{ \Big[ \vr^K \partial_t \psi + \vr^K \vu^K \cdot \Grad \psi - K \Grad \vr^K \cdot \Grad \psi \Big] } \ \dt
\end{equation}
for any $\psi \in C^1([0,T] \times \Ov{\Omega})$,
\begin{equation} \label{wB2}
\begin{split}
&\left[ \intO{ \vr^K \vu^K \cdot \varphi} \right]_{t = 0}^{t = \tau}
= \int_0^\tau \intO{ \Big[ \vr^K \vu^K \cdot \partial_t \varphi + \vr^K (\vu^K \otimes \vu^K ) : \Grad \varphi + p(\vr^K) \Div \varphi \Big] } \ \dt\\
& - \int_0^\tau \intO{  \Big[ \mathbb{S}(\Grad \vu^K ) : \Grad \varphi
+ K (\vu^K \otimes \Grad \vr^K) : \Grad \varphi  \Big] } \ \dt
\end{split}
\end{equation}
for any $\varphi \in C^1([0,T] \times \Ov{\Omega})$, $\varphi|_{\partial \Omega} = 0$.

The first observation is that the $K$-dependent quantities vanish in the
asymptotic limit $K \to 0$ as long as (\ref{BB7}) holds. To see this, note that
\begin{equation}
\begin{split} \label{IB1}
K \int_0^\tau \intO{  \Grad \vr^K \cdot \Grad \psi } \ \dt &= \sqrt{K} \int_0^\tau \intO{ \sqrt{K} \frac{\Grad \vr^K}{\sqrt{\vr^K}} \cdot \sqrt{\vr^K} \Grad \psi } \ \dt,\\
K \int_0^\tau \intO{ (\vu^K \otimes \Grad \vr^K ) : \Grad \varphi } \ \dt &= \sqrt{K} \int_0^\tau \intO{ \left( \sqrt{\vr^K} \vu^K \otimes \sqrt{K} \frac{\Grad \vr^K}{\sqrt{\vr^K}}\right ) : \Grad \varphi };
\end{split}
\end{equation}
whence, by virtue of hypothesis (\ref{BB6}), these integrals are controlled by (\ref{BB7}) at least on the set where $\vr^K \geq 1$. In order to estimate $\Grad \vr^K$ on the set where $\vr^K$ is small, we multiply (\ref{B1}) on $b'(\vr^K)$ obtaining
\begin{equation} \label{RB1}
\partial_t b(\vr^K) + \Div (b(\vr^K) \vu^K ) + \left( b'(\vr^K) \vr^K - b(\vr^K) \right)\Div \vu^K = K \Div \left( b'(\vr^K) \Grad b(\vr^K) \right) - Kb''(\vr^K) |\Grad \vr^K|^2.
\end{equation}
Such a step can be rigorously justified for the solutions of Brenner's problem discussed in \cite{FeiVas} provided, for instance, $b \in C^\infty_c[0, \infty)$. Thus taking
$b$ such that $b(\vr) = \vr^2$  for $\vr \leq 1$, integrating  (\ref{RB1}) and using (\ref{BB7}) we deduce that
\[
K \int\int_{ \{ \vr^K \leq 1 \} } |\Grad \vr^K |^2 \ \dxdt \leq c \ \mbox{uniformly for}\ K \to 0,
\]
which provides the necessary bounds for the integrals in (\ref{IB1}) on the set where $\vr^K \leq 1$. Indeed using the fact that $b$ is bounded and
the bounds established in (\ref{BB7}) we deduce
\[
\left| K \int_0^T \intO{ b''(\vr^K) |\Grad \vr^K |^2 } \ \dt \right| \leq c.
\]
On the other hand, thanks to our choice of $b$,
\[
K \int\int_{ \{ \vr^K \leq 1 \} } |\Grad \vr^K |^2 \ \dxdt = K \int_0^T \intO{ b''(\vr^K) |\Grad \vr^K |^2 } \ \dt -
K \int\int_{ \{ \vr^K > 1 \} } b''(\vr^K) |\Grad \vr^K |^2 \ \dxdt,
\]
where the right-most integral is bounded in view of (\ref{BB7}), hypothesis (\ref{BB6}) and the fact that $b''(\vr^K)$ vanishes for large $\vr^K$.

\color{black}
Consequently, we may, at least formally, let $K \to 0$ in (\ref{wB1}),
in (\ref{wB2}) and also in (\ref{B5}) obtaining a measure-valued solution to the barotropic Navier-Stokes system:
\begin{eqnarray}
\label{I1}
\partial_t \vr + \Div (\vr \vu) &=& 0,  \\
\label{I2}  \partial_t (\vr \vu) + \Div (\vr \vu \otimes \vu) + \Grad p(\vr) &=& \Div \mathbb{S} (\Grad \vu), \\
\label{I3} \vu|_{\partial \Omega} &=& 0.
\end{eqnarray}
More specifically, as all integrands in (\ref{wB1}), (\ref{wB2}) admit uniform bounds at least in the Lebesgue norm $L^1$, it is convenient to use the well developed framework of
parametrized measures associated to the family of equi-integrable functions $\{ \vr^K, \vu^K \}_{K > 0}$ generating a Young measure
\[
\nu_{t,x} \in \mathcal{P} \left([0, \infty) \times R^N \right) \ \mbox{for a.a.}\ (t,x) \in (0,T) \times \Omega,
\]
cf. Pedregal \cite[Chapter 6, Theorem 6.2]{PED1}. We will systematically use the notation
\[
\Ov{ F(\vr, \vu) }(t,x) = \langle \nu_{t,x} ; F(s, \vc{v}) \rangle
\ \mbox{for the dummy variables}\ s \approx \vr, \ \vc{v} \approx \vu.
\]


Focusing on the energy balance (\ref{B5}) we first take advantage of the no-slip boundary conditions and rewrite the viscous dissipation term in a more
convenient form
\[
\intO{ \mathbb{S} (\Grad \vu^K) : \Grad \vu^K } = \intO{ \left[ \mu |\Grad \vu^K |^2 + \lambda |\Div \vu^K |^2 \right] },\
\lambda = \frac{\mu}{3} + \eta > 0.
\]
Now, we identify
\[
\left[ \frac{1}{2} \vr^K |\vu^K|^2 + P(\vr^K) \right] (\tau, \cdot) \in \mathcal{M} (\Ov{\Omega}) \ \mbox{bounded uniformly for}\ \tau \in [0,T];
\]
\[
\left[ \mu |\Grad \vu^K |^2 + \lambda |\Div \vu^K |^2 \right] \ \mbox{bounded in}\ \mathcal{M}^+ ([0,T] \times \Ov{\Omega});
\]
whence, passing to a subsequence as the case may be, we may assume that
\[
\begin{split}
\left[ \frac{1}{2} \vr^K |\vu^K|^2 + P(\vr^K) \right] (\tau, \cdot) & \to E \ \mbox{weakly-(*) in}\
L^\infty_{\rm weak}(0,T; \mathcal{M}(\Ov{\Omega})),\\
\left[ \mu |\Grad \vu^K |^2 + \lambda |\Div \vu^K |^2 \right] &\to \sigma \ \mbox{weakly-(*) in}\ \mathcal{M}^+ ([0,T] \times \Ov{\Omega}).
\end{split}
\]
Thus, introducing new (non-negative) measures
\[
E_\infty = E - \langle \nu_{t,x} ; \frac{1}{2} s |\vc{v}|^2 + P(s) \rangle \ \dx, \ \sigma_\infty = \sigma - \left[
\mu |\nabla \langle \nu_{t,x};\vc{v}\rangle|^2 + \lambda \left( {\rm tr}|\nabla \langle \nu_{t,x};\vc{v}\rangle|\right)^2 \right]
\ \dxdt,
\]
we may perform the limit $K \to 0$ in the energy balance (\ref{B5}) obtaining
\begin{equation} \label{mvEI}
\begin{split}
&\intO{ \Ov{ \left( \frac{1}{2} \vr |\vu|^2 + P(\vr) \right) }(\tau, \cdot) } + E_\infty(\tau)[\Ov{\Omega}] + \int_0^\tau \intO{{\mu |\Grad\Ov{\vu}|^2 + \lambda |\Div \Ov{\vu}|^2 } } \ \dt + \sigma_\infty [[0,\tau] \times \Ov{\Omega}] \\
&\leq \intO{ \Ov{ \left( \frac{1}{2} \vr_0 |\vu_0|^2 + P(\vr_0) \right) }  } + E_\infty(0)[\Ov{\Omega}]
\end{split}
\end{equation}
for a.a. $\tau \in (0,T)$.

Applying a similar treatment to (\ref{wB1}) we deduce
\begin{equation} \label{mvB1}
\left[ \intO{ \Ov{\vr} \psi} \right]_{t = 0}^{t = \tau} =
\int_0^\tau \intO{ \Big[ \Ov{\vr} \partial_t \psi + \Ov{\vr \vu} \cdot \Grad \psi \Big] } \ \dt
\end{equation}
for any $\psi \in C^1([0,T] \times \Ov{\Omega})$. Note that (\ref{mvB1}) holds for any $\tau$ as the family
$\{ \vr^K \}_{K > 0}$ is precompact in $C_{\rm weak}([0,T]; L^1(\Omega))$. Indeed precompactness follows from the uniform bound for $\{\rho^K\}$ in $L\log L$ in \eqref{BB7}.

Our final goal is to perform the limit $K \to 0$ in (\ref{wB2}). This is a bit more delicate as both the convective term $\vr^K \vu^K \otimes \vu^K$ and
$p(\vr^K)$ are bounded only in $L^\infty(L^1)$. We use the following result:

\begin{Lemma}\label{Lmeas}
Let $\{ \vc{Z}_n \}_{n = 1}^\infty$, $\vc{Z}_n : Q \to R^N$ be a sequence of equi-integrable functions generating a Young measure $\nu_y$, $y \in Q$, where
$Q \subset R^M$ is a bounded domain. Let
\[
G: R^N \to [0, \infty)
\]
be a continuous function such that
\[
\sup_{n \geq 0} \| G(\vc{Z}_n) \|_{L^1(Q)} < \infty,
\]
and let $F$ be continuous such that
\[
F : R^N \to R \ \ |F(\vc{Z})| \leq G(\vc{Z}) \ \mbox{for all}\ \vc{Z} \in R^N.
\]
Denote
\[
F_\infty = \tilde F - \left< \nu_y , F(Z) \right> {\rm d}y ,\ G_\infty = \tilde G - \left< \nu_y , G(Z) \right> {\rm d}y,
\]
where $\tilde F \in \mathcal{M}(\Ov{Q})$, $\tilde G \in \mathcal{M} (\Ov{Q})$ are the weak-star limits of $\{ F(\vc{Z}_n) \}_{n \geq 1}$, $\{ G(\vc{Z}_n) \}_{n \geq 1}$
in $\mathcal{M}(\Ov{Q})$.

Then
\[
| F_\infty | \leq G_\infty.
\]

\end{Lemma}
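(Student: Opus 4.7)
The plan is to reduce everything to the classical lower-semicontinuity property of Young measures applied to the two non-negative integrands $G+F$ and $G-F$.

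First, I would note that since $|F| \leq G$, both $G+F$ and $G-F$ are continuous and non-negative on $R^N$, and the sequences $(G\pm F)(\vc{Z}_n)$ are bounded in $L^1(Q)$. The operation $H \mapsto H_\infty$ is linear in $H$, because both the weak-$*$ limit $\tilde H$ and the map $H \mapsto \langle \nu_y, H(Z)\rangle\,{\rm d}y$ depend linearly on $H$; in particular $(G\pm F)_\infty = G_\infty \pm F_\infty$. Hence it suffices to show that whenever $H:R^N\to[0,\infty)$ is continuous with $H(\vc{Z}_n)$ bounded in $L^1(Q)$ and weak-$*$ convergent in $\mathcal{M}(\overline Q)$ to some $\tilde H$, the concentration defect $H_\infty = \tilde H - \langle \nu_y, H(Z)\rangle\,{\rm d}y$ is a non-negative measure.

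To prove this positivity, I would test against an arbitrary non-negative $\varphi \in C(\overline Q)$ and invoke the Fatou-type lemma for Young measures (see, e.g., Pedregal \cite[Ch.\ 6]{PED1} or Ball's lower-semicontinuity theorem): for any non-negative continuous $H$,
\begin{equation*}
\int_{\overline Q} \varphi \,{\rm d}\tilde H = \lim_{n\to\infty} \int_Q H(\vc{Z}_n)\,\varphi\,{\rm d}y \geq \int_Q \left\langle \nu_y, H(Z)\right\rangle \varphi\,{\rm d}y.
\end{equation*}
This yields $\int_{\overline Q}\varphi\,{\rm d}H_\infty \geq 0$ for every non-negative $\varphi\in C(\overline Q)$, hence $H_\infty \geq 0$ as a measure on $\overline Q$. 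Along the way I would use the same Fatou lemma applied to $G$ itself to verify that $\langle \nu_y, G(Z)\rangle$ and $\langle \nu_y, F(Z)\rangle$ are integrable, so that $F_\infty$ and $G_\infty$ are well-defined elements of $\mathcal{M}(\overline Q)$.

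Applying the previous step to $H = G-F$ and $H = G+F$ yields $G_\infty - F_\infty \geq 0$ and $G_\infty + F_\infty \geq 0$ as measures in $\mathcal{M}(\overline Q)$. By the standard characterisation of absolute value for signed measures, this is equivalent to $|F_\infty|\leq G_\infty$, which is the claim. The only nontrivial ingredient is the Fatou-type inequality for Young-measure generating sequences that are merely $L^1$-bounded (not equi-integrable); without equi-integrability one cannot hope for equality, which is precisely why the defect measure $H_\infty$ is only non-negative rather than zero, and this phenomenon is exactly what the lemma quantifies.
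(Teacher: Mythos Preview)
Your argument is correct. You reduce the claim to showing that the concentration defect $H_\infty$ of any non-negative continuous $H$ is a non-negative measure, invoke the Fatou-type lower-semicontinuity theorem for Young measures to get this, and then apply it to $H=G\pm F$; the conclusion $|F_\infty|\leq G_\infty$ then follows from the Jordan decomposition. All steps are sound, including the linearity observation (once one works along a subsequence on which both $F(\vc{Z}_n)$ and $G(\vc{Z}_n)$ converge weak-$*$, as the statement implicitly assumes) and the measure-theoretic implication $-G_\infty\leq F_\infty\leq G_\infty\Rightarrow|F_\infty|\leq G_\infty$.

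The paper's proof proceeds differently: it splits the integrals $\int F(\vc{Z}_n)\phi$ and $\int G(\vc{Z}_n)\phi$ into the regions $\{|\vc{Z}_n|\leq M\}$ and $\{|\vc{Z}_n|>M\}$, identifies the truncated part (in the double limit $n\to\infty$, then $M\to\infty$) with the biting/Young-measure contribution, and thus represents $\langle F_\infty,\phi\rangle$ and $\langle G_\infty,\phi\rangle$ as limits of the tail integrals over $\{|\vc{Z}_n|>M\}$, where the pointwise bound $|F|\leq G$ applies directly. In effect the paper re-proves, in this specific setting, the Fatou inequality that you cite as a black box. Your route is shorter and cleaner provided one is willing to quote the lower-semicontinuity result; the paper's route is more self-contained and makes the concentration mechanism explicit.
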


\bProof

Write

\[
<\tilde F, \phi > = \lim_{n \to \infty} \int_{ |\vc{Z}_n| \leq M} F(\vc{Z}_n) \phi \ {\rm d}y + \lim_{n \to \infty} \int_{ |\vc{Z}_n| > M} F(\vc{Z}_n) \phi \ {\rm d}y,
\]
\[
<\tilde G, \phi > = \lim_{n \to \infty} \int_{ |\vc{Z}_n| \leq M} G(\vc{Z}_n) \phi \ {\rm d}y + \lim_{n \to \infty} \int_{ |\vc{Z}_n| > M} G(\vc{Z}_n) \phi \ {\rm d}y.
\]

Applying Lebesgue theorem, we get
\[
\lim_{M \to \infty} \left( \lim_{n \to \infty} \int_{ |\vc{Z}_n| \leq M} F(\vc{Z}_n) \phi \ {\rm d}y \right) = \int_Q \left< \nu_y ; F(\vc{Z}) \right> \ {\rm d}y,
\]
\[
\lim_{M \to \infty} \left( \lim_{n \to \infty} \int_{ |\vc{Z}_n| \leq M} G(\vc{Z}_n) \phi \ \dx \right) = \int_Q \left< \nu_y ; G(\vc{Z}) \right> \ {\rm d}y.
\]

Consequently,
\[
\left< F_\infty ; \phi \right> = \lim_{M \to \infty} \left( \lim_{n \to \infty} \int_{ |\vc{Z}_n| > M} F(\vc{Z}_n) \phi \ {\rm d}y \right), \
\left< G_\infty ; \phi \right> = \lim_{M \to \infty} \left( \lim_{n \to \infty} \int_{ |\vc{Z}_n| > M} G(\vc{Z}_n) \phi \ {\rm d}y  \right).
\]
As $|F| \leq G$ the desired result follows.

\qed

Seeing that
\[
|\vr u_i u_j| \leq \vr |\vu|^2 \ \mbox{and, by virtue of hypothesis (\ref{BB6})},\ p(\vr) \leq a P(\vr) \ \mbox{for}\ \vr >> 1,
\]
we may let $K \to 0$ in (\ref{wB2}) to deduce
\begin{equation} \label{mvB2}
\begin{split}
&\left[ \intO{ \Ov{\vr \vu} \cdot \varphi} \right]_{t = 0}^{t = \tau}
= \int_0^\tau \intO{ \Big[ \Ov{\vr \vu} \cdot \partial_t \varphi + \Ov{ \vr (\vu \otimes \vu ) } : \Grad \varphi + \Ov{p(\vr)} \Div \varphi \Big] } \ \dt\\
& - \int_0^\tau \intO{  \Ov{ \mathbb{S}(\Grad \vu ) } : \Grad \varphi } \ \dt + \int_0^\tau \left< {r}^M ; \Grad \varphi \right> \ \dt
\end{split}
\end{equation}
for any $\varphi \in C^1([0,T] \times \Ov{\Omega})$, $\varphi|_{\partial \Omega} = 0$, where
\[
r^M = \left\{ r^M_{i,j} \right\}_{i,j=1}^N\, r^M_{i,j} \in L^\infty_{\rm weak} (0,T; \mathcal{M}(\Ov{\Omega})),\ |r^M_{i,j}(\tau)| \leq c E_\infty (\tau)
\ \mbox{for a.a.} \ \tau \in (0,T).
\]

\subsection{Dissipative measure-valued solutions to the Navier-Stokes system}\label{dissipative}

Motivated by the previous considerations, we introduce the concept of \emph{dissipative measure valued solution} to the barotropic Navier-Stokes system.
\begin{Definition} \label{DD1}
We say that a parameterized measure $\{ \nu_{t,x} \}_{(t,x) \in (0,T) \times \Omega }$,
\[
\nu \in L^{\infty}_{\rm weak}\left( (0,T) \times \Omega; \mathcal{P} \left([0,\infty) \times R^N \right) \right),\
\left< \nu_{t,x}; s \right> \equiv \vr,\ \left< \nu_{t,x}; \vc{v} \right> \equiv \vu
\]
is a dissipative measure-valued solution of the Navier-Stokes system (\ref{I1} -- \ref{I3}) in
$(0,T) \times \Omega$, with the initial conditions $\nu_0$ and dissipation defect $\mathcal{D}$,
\[
\mathcal{D} \in L^\infty(0,T), \ \mathcal{D} \geq 0,
\]
if the following holds.
\begin{itemize}
\item {\bf Equation of continuity.}
There exists a measure $r^C\in L^1([0,T];\mathcal{M}(\Ov{\Omega}))$ and $\chi\in L^1(0,T)$ such that for a.a.\ $\tau\in(0,T)$ and every $\psi \in C^1([0,T] \times \Ov{\Omega})$,
\begin{equation}
\left| \langle r^C (\tau) ; \Grad\psi \rangle \right| \leq \chi(\tau)  \mathcal{D} (\tau) \| \psi \|_{C^1(\Ov{\Omega})}
\end{equation}
and
\begin{equation} \label{dmvB1}
\begin{split}
\intO{ \langle \nu_{\tau,x}; s \rangle \psi (\tau, \cdot) } &-  \intO{ \langle \nu_{0}; s \rangle \psi (0, \cdot) } \\
&= \int_0^\tau \intO{ \Big[ \langle \nu_{t,x}; s \rangle \partial_t \psi + \langle \nu_{t,x}; s \vc{v} \rangle \cdot \Grad \psi \Big] } \ \dt
+ \int_0^\tau \langle r^C; \Grad \psi \rangle \ \dt.
\end{split}
\end{equation}

\color{black}

\item {\bf Momentum equation.}
\[
\vu = \left< \nu_{t,x}; \vc{v} \right> \in L^2(0,T; W^{1,2}_0 (\Omega;R^N)),
\]
and there exists a measure $r^M\in L^1([0,T];\mathcal{M}(\Ov{\Omega}))$ and $\xi\in L^1(0,T)$ such that for a.a.\ $\tau\in(0,T)$ and every $\varphi \in C^1([0,T] \times \Ov{\Omega}; R^N)$,  $\varphi|_{\partial \Omega} = 0$,
\begin{equation}
\left| \langle r^M (\tau) ; \Grad\varphi \rangle \right| \leq \xi(\tau)  \mathcal{D} (\tau) \| \varphi \|_{C^1(\Ov{\Omega})}
\end{equation}
and
\begin{equation} \label{dmvB2}
\begin{split}
&\intO{ \langle \nu_{\tau,x}; s \vc{v} \rangle \cdot \varphi (\tau, \cdot) }  -  \intO{ \langle \nu_{0}; s \vc{v} \rangle \cdot \varphi (0, \cdot) }\\
&= \int_0^\tau \intO{ \Big[ \langle \nu_{t,x} ; s \vc{v} \rangle \cdot \partial_t \varphi + \langle \nu_{t,x}; s (\vc{v} \otimes \vc{v} ) \rangle : \Grad \varphi +
\langle \nu_{t,x} ; p(s) \rangle \Div \varphi \Big] } \ \dt\\
& - \int_0^\tau \intO{  \mathbb{S}({\Grad \vu })  : \Grad \varphi } \ \dt + \int_0^\tau \left< {r}^M ; \Grad \varphi \right> \ \dt.
\end{split}
\end{equation}

\color{black}

\item{\bf Energy inequality.}
\begin{equation} \label{dmvEI}
\begin{split}
\intO{ \left< \nu_{\tau,x};  \left( \frac{1}{2} s |\vc{v}|^2 + P(s) \right) \right> }
&+ \int_0^\tau \intO{ \mathbb{S}(\Grad \vu) : \Grad \vu  } \ \dt + \mathcal{D}(\tau) \\
&\leq \intO{ \left< \nu_0; \left( \frac{1}{2} s |\vc{v}|^2 + P(s) \right) \right>  }
\end{split}
\end{equation}
for a.a. $\tau \in (0,T)$.
In addition, the following version
of ``Poincar\' e's inequality" holds for a.a. $\tau \in (0,T)$:
\begin{equation} \label{KoPo}
\int_0^\tau \intO{ \left< \nu_{t,x} ;  |\vc{v} - \vu|^2 \right> } \ \dt \leq c_P  \mathcal{D}(\tau).
\end{equation}

\end{itemize}
{
\begin{Remark}
Hypothesis \eqref{KoPo} is motivated by the following observation: Suppose that
$$
\vu^\varepsilon\to \vu\ \mbox{weakly in } \ L^2(0,T;W^{1,2}_0(\Omega;R^N)),
$$
then
\begin{equation*}
\begin{split}
\int_0^\tau \intO{ \left< \nu_{t,x} ;  |\vc{v} - \vu|^2 \right> } \ \dt&=\lim\limits_{\varepsilon\to0}
\int_0^\tau\intO{|\vu^\varepsilon-\vu|^2}\ \dt\le
c_P\lim\limits_{\varepsilon\to0}\int_0^\tau\intO{|\nabla \vu^\varepsilon-\nabla\vu|^2}\ \dt\\
&=c_P\lim\limits_{\varepsilon\to0}\int_0^\tau\intO{|\nabla \vu^\varepsilon|^2-|\nabla\vu|^2}\ \dt
\leq c_P  \mathcal{D}(\tau),
\end{split}
\end{equation*}
provided the dissipation defect $\mathcal{D}$ ``contains" the oscillations and concentrations in the velocity gradient.
\end{Remark}
}
\end{Definition}

\color{black}

We tacitly assume that all integrals in (\ref{dmvB1}--\ref{KoPo}) are well defined, meaning, all integrands are measurable and at least integrable.


Notice that $\mathbb{S}(\Grad \vu): \Grad \vu \geq 0$ so that the dissipative term in the energy inequality is nonnegative.

\color{black}

The function $\mathcal{D}$ represents a dissipation defect usually attributed to (hypothetical) singularities that may appear in the course of the fluid evolution. The measure-valued formulation contains a minimal piece of information encoded in system (\ref{I1}--\ref{I3}). In contrast with the definition introduced by
Neustupa \cite{Neustup}, the oscillatory and concentration components are clearly separated and, more importantly, the energy balance is included as an integral part of the present approach.

Although one often uses the framework of Alibert and Bouchitt\'e~\cite{AlBo} in order to handle oscillations and concentrations (for instance in \cite{Gwi, BrDLSz, GwSwWi}), we choose here to give a somewhat simpler representation of the concentration effects, thereby avoiding usage of the concentration-angle measure. Indeed, the generalized Young measures of Alibert-Bouchitt\'e capture information on \emph{all} nonlinear functions of the generating sequence with suitable growth, whereas for our purposes this information is not fully needed, as we deal only with specific nonlinearities (such as $\rho\vc{u}\otimes\vc{u}$, $|\Grad\vc{u}|^2$, etc.), which are all encoded in the dissipation defect $\mathcal{D}$. This approach is inspired by~\cite{DeStTz}. We feel that the present formulation improves readability, and, more importantly,
extends considerably the class of possible applications of the weak-strong uniqueness principle stated below. Indeed, it is possible to define dissipative measure-valued solutions in the framework of Alibert-Bouchitt\'e and show that they give rise to a dissipative measure-valued solutions as defined above, but presumably not vice versa. Let us also point out that an analogue of our dissipative measure-valued solutions could be considered also for the incompressible and compressible Euler system and might thus lead to a slight simplification and generalization of the results in~\cite{BrDLSz} and \cite{GwSwWi}.

\color{black}

The considerations of Section~\ref{BM} immediately yield the following existence result: \color{black}
\begin{Theorem}
Suppose $\Omega$ is a regular bounded domain in $R^2$ or $R^3$, and suppose the pressure satisfies~\eqref{BB6}. If $(\rho_0,\vc{u}_0)$ is initial data with finite energy, then there exists a dissipative measure-valued solution with initial data
\begin{equation}
\nu_0=\delta_{(\rho_0,\vc{u}_0)}.
\end{equation}
\end{Theorem}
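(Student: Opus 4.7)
The plan is to realize the dissipative measure-valued solution as the $K\to 0$ limit of solutions to Brenner's regularized system (\ref{B1})--(\ref{B4}), following the formal computations already carried out in Section~\ref{BM}. Given initial data $(\vr_0,\vc{u}_0)$ with finite energy, I would first invoke the existence theory for Brenner's model (\cite{EF70}, Chapter~7; see also \cite{FeiVas}) applied to a regularization $(\vr_0^K, \vc{u}_0^K)$ of $(\vr_0,\vc{u}_0)$ whose initial energy converges to that of the target data, producing strong solutions $(\vr^K, \vc{u}^K)$ satisfying (\ref{wB1}), (\ref{wB2}) and the energy identity (\ref{B5}). Coercivity (\ref{BB6}) delivers the uniform bounds (\ref{BB7}); the $L^\infty(L\log L)$ control on $\vr^K$ gives precompactness of $\{\vr^K\}$ in $C_{\rm weak}([0,T];L^1(\Omega))$.

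Next, I would extract a subsequence generating a Young measure $\{\nu_{t,x}\}$ with barycentres $\langle\nu_{t,x};s\rangle = \vr$ and $\langle\nu_{t,x};\vc{v}\rangle = \vu$; the Korn--Poincar\'e bound in (\ref{BB7}) together with lower semicontinuity gives $\vu \in L^2(0,T;W^{1,2}_0(\Omega;\R^N))$ and $\vc{u}^K \rightharpoonup \vu$ weakly there. I identify the concentration defects $E_\infty$ and $\sigma_\infty$ as in (\ref{mvEI}) and set the scalar dissipation defect
\[
\mathcal{D}(\tau) := E_\infty(\tau)[\Ov{\Omega}] + \sigma_\infty([0,\tau]\times\Ov{\Omega}),
\]
which is non-negative, non-decreasing in $\tau$, and bounded by the initial energy, hence lies in $L^\infty(0,T)$. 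Thanks to (\ref{IB1})--(\ref{RB1}) the $K$-dependent contributions in (\ref{wB1})--(\ref{wB2}) vanish, and passing to the limit in the remaining integrands produces the Young-measure expectations plus concentration correctors $r^C, r^M$, defined as the differences between the weak-$*$ limits of the nonlinearities and the corresponding Young-measure actions; this yields (\ref{dmvB1}) and (\ref{dmvB2}). The energy inequality (\ref{dmvEI}) follows by passing to the limit in (\ref{B5}) as in (\ref{mvEI}), using weak lower semicontinuity of $\vc{w}\mapsto \int\mathbb{S}(\Grad\vc{w}):\Grad\vc{w}$ to dominate $\int\mathbb{S}(\Grad\vu):\Grad\vu$ by the limit of the viscous dissipation, the excess being absorbed into $\sigma_\infty$.

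It remains to verify the defect bounds. Lemma~\ref{Lmeas}, applied with dominating function $G(s,\vc{v}) = \tfrac12 s|\vc{v}|^2 + P(s)$ and with $F$ taken successively to be $sv_i$, $sv_iv_j$ and $p(s)$ (using (\ref{BB6}) to dominate $p$ by a constant multiple of $P$ near infinity), yields pointwise estimates $|r^C(\tau)|, |r^M(\tau)| \le c\, E_\infty(\tau) \le c\,\mathcal{D}(\tau)$ in $\mathcal{M}(\Ov{\Omega})$, so that the required multipliers $\chi, \xi\in L^1(0,T)$ (in fact constants) follow by duality against $\Grad\psi$ and $\Grad\varphi$. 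For (\ref{KoPo}), lower semicontinuity of Young-measure actions gives
\[
\int_0^\tau\intO{\langle\nu_{t,x};|\vc{v}-\vu|^2\rangle}\,\dt \le \liminf_{K\to 0}\int_0^\tau\intO{|\vc{u}^K-\vu|^2}\,\dt,
\]
and the Korn--Poincar\'e inequality applied to $\vc{u}^K-\vu\in W^{1,2}_0(\Omega;\R^N)$ bounds the right-hand side by a constant times $\int_0^\tau\intO{|\Grad\vc{u}^K|^2-|\Grad\vu|^2}\,\dt$, which tends to $c_P\,\sigma_\infty([0,\tau]\times\Ov{\Omega}) \le c_P\,\mathcal{D}(\tau)$ by construction of $\sigma_\infty$.

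The main obstacle is this last coordination: arranging matters so that a \emph{single} scalar defect $\mathcal{D}$ simultaneously controls (i) the concentration encoded in $r^C$ and $r^M$, (ii) the gap in the viscous-dissipation term of (\ref{dmvEI}), and (iii) the gradient excess feeding (\ref{KoPo}). The additive choice above works because the first item is governed by $E_\infty$ via Lemma~\ref{Lmeas}, while the last two are governed by $\sigma_\infty$; a small technical subtlety is that $E_\infty \in L^\infty_{\rm weak}(0,T;\mathcal{M}(\Ov{\Omega}))$ is time-parametrized whereas $\sigma_\infty$ is a full space-time measure, which is reconciled by replacing it by its monotone distribution function $\tau\mapsto\sigma_\infty([0,\tau]\times\Ov{\Omega})$.
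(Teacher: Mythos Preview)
Your proposal is correct and follows essentially the same route as the paper: approximate the data, solve Brenner's system for each $K$, pass to the limit using the estimates of Section~\ref{BM}, and set $\mathcal{D}(\tau)=E_\infty(\tau)[\Ov{\Omega}]+\sigma_\infty([0,\tau]\times\Ov{\Omega})$, with (\ref{KoPo}) obtained from the Korn--Poincar\'e inequality applied to $\vu^K-\vu$. The only (harmless) difference is that the paper observes directly that $r^C=0$ here---the momentum $\vr^K\vu^K$ does not concentrate under a uniform energy bound---whereas you allow $r^C\ne 0$ and control it via Lemma~\ref{Lmeas}; either way the required defect bound holds with $\chi,\xi$ constant.
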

\begin{proof}
For every $K>0$, we find a weak solution to Brenner's model with initial data $\vc{u}_0^K\in C_c^{\infty}(\Omega)$ and $\rho^K_0\in C^\infty(\Ov{\Omega})$ such that $\Grad\rho^K_0\cdot\vc{n}|_{\partial \Omega} = 0$ and such that
$\rho_0^K\to\rho_0$, $\rho_0^K\vc{u}^K_0\to\rho_0\vc{u}_0$, and
\[
\frac{1}{2}\rho_0^K|\vc{u}^K_0|^2+P(\rho_0^K)\to\frac{1}{2}\rho_0|\vc{u}_0|^2+P(\rho_0)
\]
in $L^1(\Omega)$, respectively. Indeed, it is easy to see that such an approximation of the initial density exists (use a simple truncation and smoothing argument). Then, the arguments of Section~\ref{BM} yield a dissipative measure-valued solution with
\[
\mathcal{D}(\tau)=E_\infty(\tau)[\Ov{\Omega}]+\sigma_\infty[[0,\tau]\times\Ov{\Omega}]
\]
for a.a.\ $\tau\in(0,T)$. Moreover, we have $r^C=0$ and $\chi\equiv0$, $\xi\equiv c$. The Poincar\'e-Korn inequality~\eqref{KoPo} is an easy consequence of the respective inequality for each $\vc{u}^K$.
\end{proof}
Note that our definition of dissipative measure-valued solutions is arguably broader than necessary: For instance, any approximation sequence with a uniform bound on the energy will not concentrate in the momentum, whence $r^C=0$. We choose to include such an effect in our definition anyway since even in this potentially larger class of measure-valued solutions we can still show weak-strong uniqueness: a measure-valued and a smooth solution starting from the same initial data coincide as long as the latter exists. In other words, the set of classical (smooth) solutions is stable in the class of dissipative measure-valued solutions. Showing this property is the main goal of the present paper.

\section{Relative energy}

The commonly used form of the relative energy (entropy) functional in the context of weak solutions to the barotropic Navier-Stokes system reads
\[
\mathcal{E} \left( \vr, \vu \ \Big| r, \vc{U} \right)  = \intO{ \left[ \frac{1}{2} \vr |\vu - \vc{U}|^2 + P(\vr) - P'(r) (\vr - r) - P(r) \right] },
\]
where $\vr$, $\vu$ is a weak solution and $r$ and $\vc{U}$ are arbitrary ``test'' functions mimicking the basic properties of $\vr$, $\vu$, specifically,
$r$ is positive and $\vc{U}$ satisfies the relevant boundary conditions, see Feireisl et al.\ \cite{FeJiNo}, Germain \cite{Ger}, Mellet and Vasseur \cite{MeVa1}, among others. Here, the
crucial observation is that
\[
\mathcal{E} \left( \vr, \vu \ \Big| r, \vc{U} \right) = \intO{ \left[ \frac{1}{2} \vr |\vu|^2 + P(\vr) \right] } - \intO{ \vr \vu \cdot \vc{U} } + \intO{ \frac{1}{2} \vr |\vc{U}|^2 }
- \intO{ P'(r) \vr } + \intO{ p(r) },
\]
where all integrals on the right-hand side may be explicitly expressed by means of either the energy inequality or the field equations. Accordingly, a relevant
candidate in the framework of (dissipative) measure valued solutions is
\[
\begin{split}
&\mathcal{E}_{mv} \left( \vr, \vu \ \Big| r, \vc{U} \right)(\tau)  = \intO{ \left< \nu_{\tau,x};  \frac{1}{2} s |\vc{v} - \vc{U}|^2 + P(s) - P'(r) (s - r) - P(r) \right> } \\
&= \intO{ \left< \nu_{\tau,x}; \frac{1}{2} s |\vc{v}|^2 + P(s) \right> }  - \intO{ \left< \nu_{\tau,x}; s \vc{v} \right> \cdot \vc{U} } + \intO{ \frac{1}{2}
\left< \nu_{\tau,x} ; s \right> |\vc{U}|^2 }\\
&- \intO{ \left< \nu_{\tau,x} ; s \right> P'(r) } + \intO{ p(r) }.
\end{split}
\]
Our goal in the remaining part of this section is to express all integrals on the right hand side in terms of the energy balance (\ref{dmvEI})  and the field equations
(\ref{dmvB1}), (\ref{dmvB2}).

\subsection{Density dependent terms}

Using the equation of continuity (\ref{dmvB1}) with test function $\frac{1}{2}|\vc{U}|^2$, we get
\begin{equation} \label{RE1}
\begin{split}
&\intO{ \frac{1}{2} \left< \nu_{\tau,x}; s \right> |\vc{U}|^2(\tau, \cdot) } - \intO{ \frac{1}{2} \left< \nu_{0,x}; s \right> |\vc{U}|^2(0, \cdot) } \\
& = \int_0^\tau \intO{ \left[ \left< \nu_{t,x}; s \right> \vc{U} \cdot \partial_t \vc{U} + \left< \nu_{t,x}; s \vc{v} \right> \cdot \vc{U}
\cdot \Grad \vc{U} \right] } \ \dt + \int_0^\tau \intO{ \left< r^C; \frac{1}{2} \Grad |\vc{U}|^2 \right> } \ \dt
\end{split}
\end{equation}
provided $\vc{U} \in C^1([0,T] \times \Ov{\Omega}; R^N)$. \color{black}

Similarly, testing with $P'(r)$ we can write
\begin{equation} \label{RE2}
\begin{split}
& \intO{ \left< \nu_{\tau,x} ;s \right>  P'(r) (\tau, \cdot) }  -  \intO{ \left< \nu_{0,x} ;s \right>  P'(r) (0, \cdot) }\\
&= \int_0^\tau \intO{ \left[ \left< \nu_{t,x}; s \right> P''(r)  \partial_t r + \left< \nu_{t,x}; s \vc{v} \right> \cdot P''(r)
\cdot \Grad r \right] } \ \dt + \int_0^\tau \intO{ \left< r^C ; P'(r)  \right> } \ \dt\\
&= \int_0^\tau \intO{ \left[ \left< \nu_{t,x}; s \right> \frac{p'(r)}{r}  \cdot \partial_t r + \left< \nu_{t,x}; s \vc{v} \right> \frac{p'(r)}{r} \cdot
\Grad r \right] } \ \dt   + \int_0^\tau \intO{ \left< r^C ; \Grad P'(r)  \right> } \ \dt
\end{split}
\end{equation} \color{black}
provided $r > 0$ and $r \in C^1([0,T] \times \Ov{\Omega})$, and $P$ is twice continuously differentiable in $(0, \infty)$.

\subsection{Momentum dependent terms}


Analogously to the preceding part, we use (\ref{dmvB2}) to compute 
\begin{equation} \label{RE3}
\begin{split}
&\intO{ \left< \nu_{\tau,x} ; s \vc{v} \right> \cdot \vc{U} (\tau, \cdot) } - \intO{ \left< \nu_{0,x} ; s \vc{v} \right> \cdot \vc{U} (0, \cdot) } \\
&= \int_0^\tau \intO{ \left< \nu_{t,x}; s \vc{v} \right> \cdot \partial_t \vc{U} } \ \dt + \int_0^\tau \int_{\Ov{\Omega}} \left[ \left< \nu_{t,x}; s \vc{v} \otimes \vc{v} \right> : \Grad \vc{U} + \left< \nu_{t,x}; p(s) \right> \Div \vc{U} \right] \dx \ \dt\\
& - \int_0^\tau \intO{ \left< \nu_{t,x} ; \mathbb{S} (\mathbb{D} ) \right> : \Grad \vc{U} } \ \dt + \int_0^\tau \left< r^M; \Grad \vc{U} \right>  \ \dt\\
&= \int_0^\tau \intO{ \left< \nu_{t,x}; s \vc{v} \right> \cdot \partial_t \vc{U} } \ \dt + \int_0^\tau \int_{\Ov{\Omega}} \left[ \left< \nu_{t,x}; s \vc{v} \otimes \vc{v} \right> : \Grad \vc{U} + \left< \nu_{t,x}; p(s) \right> \Div \vc{U} \right] \dx \ \dt\\
& - \int_0^\tau \intO{ \mathbb{S}(\Grad \vu)  : \Grad \vc{U} } \ \dt + \int_0^\tau \left< r^M; \Grad \vc{U} \right>  \ \dt
\end{split}
\end{equation}
for any $\vc{U} \in C^1([0,T] \times \Ov{\Omega}; R^N)$, $\vc{U}|_{\partial \Omega} = 0$.

\color{black}

\subsection{Relative energy inequality}

Summing up the previous discussion we may deduce a measure-valued analogue of the relative energy inequality:
\begin{equation} \label{RE5}
\begin{split}
\mathcal{E}_{mv} \left( \vr, \vu \ \Big| r , \vc{U} \right)    &+
\int_0^\tau \mathbb{S}(\Grad \vu) : \left( \Grad \vu
- \Grad \vc{U} \right)   \ \dxdt  + \mathcal{D}(\tau) \\
&\leq \intO{ \left< \nu_{0,x};  \frac{1}{2} s |\vc{v} - \vc{U}_0|^2 + P(s) - P'(r_0) (s - r_0) - P(r_0) \right> } \\
& - \int_0^\tau \intO{ \left< \nu_{t,x}, s \vc{v} \right> \cdot \partial_t \vc{U} } \ \dt\\
& - \int_0^\tau \int_{\Ov{\Omega}} \left[ \left< \nu_{t,x};  s \vc{v} \otimes \vc{v} \right> : \Grad \vc{U} + \left< \nu_{t,x}; p(s) \right> \Div \vc{U} \right] \dx \ \dt\\
& + \int_0^\tau \intO{ \left[ \left< \nu_{t,x}; s \right> \vc{U}  \cdot \partial_t \vc{U} + \left< \nu_{t,x}; s \vc{v} \right> \cdot \vc{U}
\cdot \Grad \vc{U} \right] } \ \dt\\
&+ \int_0^\tau \intO{ \left[ \left< \nu_{t,x} ; \left(1 - \frac{s}{r}  \right) \right> p'(r) \partial_t r - \left< \nu_{t,x}; s \vc{v} \right> \cdot \frac{p'(r)}{r}
\Grad r \right] } \ \dt\\
&+ \int_0^\tau  \left< r^C; \frac{1}{2} \Grad |\vc{U}|^2  - \Grad P'(r) \right>  \  \dt - \int_0^\tau  \left< r^M ; \Grad \vc{U} \right>  \dt.
\end{split}
\end{equation}
\color{black}
As already pointed out,
the relative entropy inequality (\ref{RE5}) holds for any $r \in C^1([0,T] \times \Ov{\Omega})$, $r > 0$, and any
$\vc{U} \in C^1([0,T] \times \Ov{\Omega}; R^N)$,
$\vc{U}|_{\partial \Omega} = 0$.

Moreover,
in accordance with Definition \ref{DD1}, we have
\[
\begin{split}
&\left| \int_0^\tau  \left< r^C; \frac{1}{2} \Grad |\vc{U}|^2  - \Grad P'(r) \right>  \  \dt - \int_0^\tau  \left< r^M ; \Grad \vc{U} \right>  \dt \right| \\
&\leq c \left( \left\| \Grad \vc{U} \right\|_{C([0, T] \times \Ov{\Omega}; R^{N \times N})} +
\left\| r \right\|_{C([0,T] \times \Ov{\Omega}) } + \left\| \Grad r \right\|_{C([0, T] \times \Ov{\Omega}; R^{N})}    \right) \int_0^\tau (\chi(t)+\xi(t)) \mathcal{D}(t) \ \dt.
\end{split}
\]
Thus the validity of (\ref{RE5}) can be extended to the following class of test functions by a simple argument: \color{black}
\begin{equation} \label{TEST}
\vc{U} , \Grad \vc{U}, r, \Grad r \in C([0,T] \times \Ov{\Omega}),\ \partial_t r, \ \partial_t \vc{U} \in L^1(0,T; C(\Ov{\Omega})),\
r > 0,\ \vc{U}|_{\partial \Omega} = 0.
\end{equation}

\section{Weak-strong uniqueness}

Now, we suppose that the test functions $r$, $\vc{U}$ belong to the class (\ref{TEST}), and, in addition, solve the Navier-Stokes system (\ref{I1}--\ref{I3}).
Our goal is to show that the measure valued solution and the strong one are close in terms of the ``distance'' of the initial data. We proceed in several steps.

\subsection{Continuity equation}

In addition to the general hypotheses that guarantee (\ref{RE5}) suppose that $r, \vc{U}$ satisfy the equation of continuity
\bFormula{RE6}
\partial_t r + \Div (r \vc{U} ) = 0.
\eF

Accordingly, we may rewrite (\ref{RE5}) as
\begin{equation} \label{RE7}
\begin{split}
\mathcal{E}_{mv} \left( \vr, \vu \ \Big| r , \vc{U} \right)    &+
\int_0^\tau \intO{ \mathbb{S}(\Grad \vu) : \left( \Grad \vu
- \Grad \vc{U} \right)  } \ \dt  + \mathcal{D}(\tau) \\
&\leq \intO{ \left< \nu_{0,x};  \frac{1}{2} s |\vc{v} - \vc{U}_0|^2 + P(s) - P'(r_0) (s - r_0) - P(r_0) \right> } \\
& - \int_0^\tau \intO{ \left< \nu_{t,x}, s \vc{v} \right> \cdot \partial_t \vc{U} } \ \dt - \int_0^\tau \int_{\Ov{\Omega}}  \left< \nu_{t,x};  s \vc{v} \otimes \vc{v} \right> : \Grad \vc{U}  \dx \ \dt\\
& + \int_0^\tau \intO{ \left[ \left< \nu_{t,x}; s \right> \vc{U}  \cdot \partial_t \vc{U} + \left< \nu_{t,x}; s \vc{v} \right> \cdot \vc{U}
\cdot \Grad \vc{U} \right] } \ \dt\\
& + \int_0^\tau \intO{  \left< \nu_{t,x}; s \vc{U} - s \vc{v} \right> \cdot \frac{p'(r)}{r} \Grad r  } \ \dt \\
&- \int_0^\tau \intO{  \left< \nu_{t,x} ; p(s) - p'(r)(s -r) - p(r)  \right> \Div \vc{U} } \ \dt\\
&+ c \int_0^\tau  (\chi(t)+\xi(t)) \mathcal{D}(t)  \dt
\end{split}
\end{equation}
where the constant $c$ depends only on the norms of the test functions specified in (\ref{TEST}).

\subsection{Momentum equation}

In addition to (\ref{RE6}) suppose that $r, \vc{U}$ satisfy also the momentum balance
\[
\partial_t \vc{U} + \vc{U} \cdot \Grad \vc{U} + \frac{1}{r} \Grad p(r) = \frac{1}{r} \Div \mathbb{S} (\Grad \vc{U}).
\]
Indeed, it is easily seen that this follows from the momentum equation in conjunction with~\eqref{RE6}. Accordingly, relation (\ref{RE7}) reduces to
\begin{equation} \label{RE8}
\begin{split}
\mathcal{E}_{mv} \left( \vr, \vu \ \Big| r , \vc{U} \right)    &+
\int_0^\tau \intO{ \mathbb{S}(\Grad \vu) : \left( \Grad \vu
- \Grad \vc{U} \right)  } \ \dt   + \mathcal{D}(\tau) \\
&\leq \intO{ \left< \nu_{0,x};  \frac{1}{2} s |\vc{v} - \vc{U}_0|^2 + P(s) - P'(r_0) (s - r_0) - P(r_0) \right> } \\
& + \int_0^\tau \intO{ \left< \nu_{t,x}, s \vc{v}  - s \vc{U} \right> \cdot \Grad \vc{U} \cdot \vc{U} } \ \dt - \int_0^\tau \int_{\Ov{\Omega}}  \left< \nu_{t,x};  s \vc{v} \otimes \vc{v} \right> : \Grad \vc{U}  \dx \ \dt\\
& + \int_0^\tau \intO{ \left< \nu_{t,x}; s \vc{v} \right> \cdot
 \Grad \vc{U}\cdot  \vc{U}} \ \dt\\
& + \int_0^\tau \intO{  \left< \nu_{t,x}; s \vc{U} - s \vc{v} \right> \cdot \frac{1}{r} \Div \mathbb{S} (\Grad \vc{U})  } \ \dt \\
&+ \int_0^\tau \intO{  \left< \nu_{t,x} ; p(s) - p'(r)(s -r) - p(r)  \right> \Div \vc{U} } \ \dt\\
&+ c \int_0^\tau  (\chi(t)+\xi(t)) \mathcal{D}(t)  \dt
\end{split}
\end{equation}
where, furthermore, \color{black}
\[
\begin{split}
&\int_0^\tau \intO{ \left< \nu_{t,x}; \left( s \vc{v} - s \vc{U} \right) \right> \cdot \Grad \vc{U} \cdot \vc{U} } \ \dt
- \int_0^\tau \int_{{\Omega}} \left< \nu_{t,x} ; s \vc{v} \otimes \vc{v} \right> : \Grad \vc{U} \dx\ \dt\\
&+ \int_0^\tau \intO{ \left< \nu_{t,x}; s \vc{v} \right> \cdot \Grad \vc{U} \cdot \vc{U}   } \ \dt\\
&=  \int_0^\tau \intO{ \left< \nu_{t,x}; \left( s \vc{v} - s \vc{U} \right) \right> \cdot \Grad \vc{U} \cdot \vc{U} } \ \dt +
\int_0^\tau \int_{{\Omega}} \left< \nu_{t,x}; s \vc{v} \cdot   \Grad \vc{U}\cdot(\vc{U} - \vc{v} ) \right> \dx\ \dt\\
&= \int_0^\tau \int_{{\Omega}} \left< \nu_{t,x}; s (\vc{v} - \vc{U}) \cdot \Grad \vc{U}\cdot (\vc{U} - \vc{v} )  \right>  \dx\ \dt.
\end{split}
\]
Thus, finally, (\ref{RE8}) can be written as
\begin{equation} \label{RE9}
\begin{split}
\mathcal{E}_{mv} \left( \vr, \vu \ \Big| r , \vc{U} \right)    &+
\int_0^\tau \intO{  \mathbb{S}(\Grad \vu -\Grad\vc{U}):(\Grad \vu -\Grad\vc{U})  } \ \dt+ \mathcal{D}(\tau) \\
&\leq \intO{ \left< \nu_{0,x};  \frac{1}{2} s |\vc{v} - \vc{U}_0|^2 + P(s) - P'(r_0) (s - r_0) - P(r_0) \right> } \\
& + \int_0^\tau \int_{{\Omega}} \left< \nu_{t,x}; s (\vc{v} - \vc{U})\cdot \Grad \vc{U} \cdot (\vc{U} - \vc{v} )  \right>  \dx\ \dt\\
& - \int_0^\tau \intO{ (\Grad \vu  - \Grad \vc{U} )  : \mathbb{S} (\Grad \vc{U})  } \ \dt\\
& + \int_0^\tau \intO{  \left< \nu_{t,x}; s \vc{U} - s \vc{v} \right> \cdot \frac{1}{r} \Div \mathbb{S} (\Grad \vc{U})  } \ \dt \\
&+ \int_0^\tau \intO{  \left< \nu_{t,x} ; p(s) - p'(r)(s -r) - p(r)  \right> \Div \vc{U} } \ \dt\\
&+ c \int_0^\tau  (\chi(t)+\xi(t)) \mathcal{D}(t)  \dt.
\end{split}
\end{equation}

\color{black}

\subsection{Compatibility}

Our last goal is to handle the difference
\[
\int_0^\tau \intO{  \left< \nu_{t,x}; s \vc{U} - s \vc{v} \right> \cdot \frac{1}{r} \Div \mathbb{S} (\Grad \vc{U})  } \ \dt
- \int_0^\tau \intO{ (\Grad \vu  - \Grad \vc{U} )  : \mathbb{S} (\Grad \vc{U})  } \ \dt.
\]
To this end, we need slightly more regularity than required in (\ref{TEST}), namely
\[
\Div \tn{S}(\Grad \vc{U}) \in L^2(0,T; L^\infty({\Omega}; R^{N\times N})) \ \mbox{or,
equivalently}\ \partial_t \vc{U} \in L^2(0,T; L^\infty ({\Omega}; R^N)).
\]

\color{black}

Now,
since
\[
{\vu}\in L^2((0,T); W^{1,2}_0(\Omega;R^N)),
\]
we get
\[
\begin{split}
&\int_0^\tau \intO{  \left< \nu_{t,x}; s \vc{U} - s \vc{v} \right> \cdot \frac{1}{r} \Div \mathbb{S} (\Grad \vc{U})  } \ \dt
- \int_0^\tau \intO{  (\Grad \vu - \Grad \vc{U} )  : \mathbb{S} (\Grad \vc{U})  } \ \dt\\
& = \int_0^\tau \intO{ \left< \nu_{t,x};  \left( s \vc{U} - s \vc{v} + r \vc{v} - r \vc{U} \right) \right> \cdot \frac{1}{r} \Div \mathbb{S} (\Grad \vc{U}) } \ \dt\\
& = \int_0^\tau \intO{ \left< \nu_{t,x};  (s - r) (\vc{U} - \vc{v} ) \right> \cdot \frac{\Div \mathbb{S} (\Grad \vc{U})}{r} } \ \dt.
\end{split}
\]

\color{black}

Now, we write
\[
\begin{split}
&\left< \nu_{t,x};  (s - r) (\vc{U} - \vc{v} ) \right> \\
&= \left< \nu_{t,x} ; \psi (s)  (s - r)  (\vc{U} - \vc{v} ) \right>  + \left< \nu_{t,x}; (1 - \psi (s)) (s - r) (\vc{U} - \vc{v} ) \right>,
\end{split}
\]
where
\[
\psi \in C^\infty_c(0, \infty),\ 0 \leq \psi \leq 1, \ \psi(s) = 1 \ \mbox{for} \ s \in ( \inf{r}, \sup{r} ).
\]
Consequently, we get
\[
\left< \nu_{t,x};  \psi (s) (s - r) (\vc{U} - \vc{v} ) \right>
\leq \frac{1}{2}\left< \nu_{t,x};  \frac{\psi (s)^2}{\sqrt{s}} (s - r)^2 \right>  + \frac{1}{2}\left< \nu_{t,x};  \frac{\psi (s)^2}{\sqrt{s}} {s} |\vc{U} - \vc{v} |^2 \right>,
\]
where, as $\psi$ is compactly supported in $(0, \infty)$, both terms can be controlled in (\ref{RE9}) by $\mathcal{E}_{mv}$, as is easily verified.

Next, we write
\[
\begin{split}
&\left< \nu_{t,x}; (1 - \psi (s)) (s - r) (\vc{U} - \vc{v} ) \right> \\
& = \left< \nu_{t,x}; \omega_1 (s) (s - r) (\vc{U} - \vc{v} ) \right> + \left< \nu_{t,x};  \omega_2 (s) (s - r) (\vc{U} - \vc{v} ) \right>,
\end{split}
\]
where
\[
{\rm supp}[\omega_1] \subset [0, \inf r), \ {\rm supp}[\omega_2] \subset (\sup r, \infty], \ \omega_1+\omega_2=1-\psi.
\]
Accordingly,
\[
\left< \nu_{t,x};  \omega_1 (s) (s - r) (\vc{U} - \vc{v} )  \right> \leq c(\delta) \left< \nu_{t,x} ; \omega_1^2(s) (s -r)^2 \right>  + \delta \left< \nu_{t,x}; |\vc{U} - \vc{v}|^2 \right>
\]
where the former term on the right-hand side is controlled by $\mathcal{E}_{mv}$ while the latter can be absorbed by the left hand side of (\ref{RE9}) by virtue of
Poincar\' e inequality stipulated in (\ref{KoPo}) provided $\delta > 0$ has been chosen small enough. Indeed, on one hand,
\[
\begin{split}
\left< \nu_{t,x}; |\vc{U} - \vc{v} |^2 \right> &= |\vc{U}|^2 - 2 \vu \cdot \vc{U} + |\vu|^2 + \left< \nu_{t,x} ; |\vc{v}|^2 - |\vu|^2 \right>\\
& = |\vc{u} - \vc{U} |^2  + \left< \nu_{t,x} ; |\vc{v} - \vu|^2 \right>;
\end{split}
\]
whence, by virtue of (\ref{KoPo}) and the standard Poincar\' e-Korn inequality,
\[
\int_0^\tau \intO{ \left< \nu_{t,x}; |\vc{U} - \vc{v} |^2 \right> } \ \dt \leq c_P \left( \int_0^\tau \intO{ \left( \mathbb{S} (\Grad \vu -
\Grad \vc{U}) \right) : \left(\Grad \vu -
\Grad \vc{U} \right)  } \ \dt + \mathcal{D}(\tau)
\right).
\]


\color{black}

Finally,
\[
\left< \omega_2 (s) (s - r) (\vc{U} - \vc{v} ) \right> \leq c \ \left< \nu_{t,x}; \omega_2 (s) \left( s +  s (\vc{U} -  \vc{v} )^2 \right) \right>,
\]
where both integrals are controlled by $\mathcal{E}_{mv}$.


Summing up the previous discussion, we deduce from (\ref{RE9}) that
\[
\begin{split}
&\mathcal{E}_{mv} \left( \vr, \vu \ \Big| r , \vc{U} \right)
  + \frac{1}{2} \mathcal{D}(\tau)   \\
&\leq \intO{ \left< \nu_{0,x};  \frac{1}{2} s |\vc{v} - \vc{U}(0, \cdot) |^2 + P(s) - P'(r(0,\cdot)) (s - r(0,\cdot)) - P(r(0,\cdot)) \right> }\\
&+ c \left( \int_0^\tau   \mathcal{E}_{mv} \left( \vr, \vu \ \Big| r , \vc{U} \right)\ \dt + \int_0^\tau (\chi(t)+\xi(t))\mathcal{D}(t) \ \dt \right).
\end{split}
\]
Thus applying Gronwall's lemma, we conclude that
\begin{equation} \label{RE10}
\begin{split}
&\mathcal{E}_{mv}  \left( \vr, \vu \ \Big| r , \vc{U} \right) (\tau)  +\mathcal{D}(\tau) \\
&\leq c(T) \intO{ \left< \nu_{0,x};  \frac{1}{2} s |\vc{v} - \vc{U}(0, \cdot) |^2 + P(s) - P'(r(0,\cdot)) (s - r(0,\cdot)) - P(r(0,\cdot)) \right> }
\end{split}
\end{equation}
for a.a. $\tau \in [0,T]$.

\color{black}

We have shown the main result of the present paper.


\begin{Theorem} \label{TT1}
Let $\Omega \subset R^N$, $N=2,3$ be a bounded smooth domain. Suppose the pressure $p$ satisfies (\ref{BB6}). Let $\{ \nu_{t,x}, \mathcal{D} \}$ be a dissipative measure-valued
solution to the barotropic Navier-Stokes system (\ref{I1}--\ref{I3}) in $(0,T) \times \Omega$, with the initial state represented by $\nu_0$, in the sense specified in Definition \ref{DD1}.
Let $[r, \vc{U}]$ be a strong solution of (\ref{I1}--\ref{I3}) in $(0,T) \times \Omega$ belonging to the class
\[
r, \ \Grad r, \ \vc{U},\ \Grad \vc{U} \in C([0,T] \times \Ov{\Omega}),\
\partial_t \vc{U} \in L^2(0,T; C(\overline{\Omega};R^N)),\ r > 0,\ \vc{U}|_{\partial \Omega} = 0.
\]

Then there is a constant $\Lambda = \Lambda(T)$, depending only on the norms of $r$, $r^{-1}$, $\vc{U}$, $\chi$, and $\xi$ in the aforementioned spaces, such that
\[
\begin{split}
& \intO{ \left< \nu_{\tau,x};  \frac{1}{2} s |\vc{v} - \vc{U}|^2 + P(s) - P'(r) (s - r) - P(r) \right> }
 \\ &+  \int_0^\tau \intO{ | \Grad \vu - \Grad \vc{U} |^2   } \ \dt + \mathcal{D}(\tau) \\
&\leq \Lambda(T) \intO{ \left< \nu_{0,x};  \frac{1}{2} s |\vc{v} - \vc{U}(0, \cdot) |^2 + P(s) - P'(r(0,\cdot)) (s - r(0,\cdot)) - P(r(0,\cdot)) \right> }
\end{split}
\]
for a.a. $\tau \in (0,T)$. In particular, if the initial states coincide, meaning
\[
\nu_{0,x} = \delta_{[ r(0,x), \vc{U}(0,x) ]} \ \mbox{for a.a.} \ x \in \Omega
\]
then $\mathcal{D} = 0$, and
\[
\nu_{\tau,x} = \delta_{[ r(\tau,x), \vc{U}(\tau,x) ]} \ \mbox{for a.a.}\ \tau \in (0,T),\ x \in \Omega.
\]

\end{Theorem}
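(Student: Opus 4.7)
The plan is to apply the relative energy inequality (\ref{RE5}) with the test pair $(r,\vc{U})$ chosen to be the given strong solution itself. The regularity assumed on $(r,\vc{U})$ in the theorem is precisely what is needed to fit into the class (\ref{TEST}), while the extra integrability $\partial_t\vc{U}\in L^2(0,T;L^\infty)$ is exactly what the compatibility step of the argument demands. Since $(r,\vc{U})$ satisfies both the continuity equation and the momentum equation pointwise, the general relative energy inequality (\ref{RE5}) collapses successively to (\ref{RE7}), (\ref{RE8}), and (\ref{RE9}).

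The next step is to bound, pointwise in $\tau$, each term on the right of (\ref{RE9}) either by $\mathcal{E}_{mv}$ (up to a factor depending on the norms of $r$ and $\vc{U}$) or by a small fraction of the viscous dissipation sitting on the left. The convective contribution $\langle\nu_{t,x}; s(\vc{v}-\vc{U})\cdot\Grad\vc{U}\cdot(\vc{U}-\vc{v})\rangle$ is immediately controlled by $\|\Grad\vc{U}\|_\infty\,\mathcal{E}_{mv}$. The pressure excess $\langle\nu_{t,x}; p(s)-p'(r)(s-r)-p(r)\rangle$ is nonnegative, and, by a Taylor expansion at $s=r$ together with the coercivity hypothesis (\ref{BB6}), it is pointwise dominated by a constant multiple of the integrand of $\mathcal{E}_{mv}$.

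The main obstacle is the compatibility term
\[
\int_0^\tau \intO{\langle \nu_{t,x}; (s-r)(\vc{U}-\vc{v})\rangle\cdot\tfrac{1}{r}\Div\mathbb{S}(\Grad\vc{U})}\ \dt,
\]
obtained after combining the two stress-related integrals. Because $s$ may lie anywhere in $[0,\infty)$, this expression is not obviously controlled by $\mathcal{E}_{mv}$, which only measures $s|\vc{v}-\vc{U}|^2$ and the pressure excess. The trick is to introduce a cutoff $\psi\in C^\infty_c(0,\infty)$ equal to $1$ on $[\inf r,\sup r]$ and split the integrand into three regions: $s$ in a neighbourhood of the range of $r$, $s<\inf r$, and $s>\sup r$. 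In the first region the weight $\psi(s)^2/\sqrt{s}$ is bounded, so Cauchy--Schwarz produces quantities controlled by $\mathcal{E}_{mv}$. In the third region, the at-most-linear growth of $\omega_2(s)(s-r)$ in $s$ together with the bound on the total mass yields control by $\mathcal{E}_{mv}$. In the second (low-density) region Cauchy--Schwarz with a small parameter $\delta$ leaves a remainder $\delta\,\langle\nu_{t,x};|\vc{v}-\vc{U}|^2\rangle$; the crucial point is that the Poincar\'e-type hypothesis (\ref{KoPo}), combined with the standard Poincar\'e--Korn inequality applied to $\vu-\vc{U}$, bounds this by $\delta c_P$ times the sum of $\mathcal{D}(\tau)$ and the viscous term on the left of (\ref{RE9}), which can therefore be absorbed for $\delta$ small.

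After absorbing these remainders one arrives at a differential inequality of the schematic form
\[
\mathcal{E}_{mv}(\tau)+\tfrac{1}{2}\mathcal{D}(\tau)\leq \mathcal{E}_{mv}(0)+c\int_0^\tau(1+\chi(t)+\xi(t))\bigl(\mathcal{E}_{mv}(t)+\mathcal{D}(t)\bigr)\ \dt,
\]
and Gronwall's lemma supplies the quantitative bound (\ref{RE10}) with a constant $\Lambda(T)$ depending only on the relevant norms of $r$, $r^{-1}$, $\vc{U}$, $\chi$, $\xi$. For the final assertion, if $\nu_{0,x}=\delta_{(r(0,x),\vc{U}(0,x))}$ then the right-hand side vanishes identically; by the strict convexity of $P$ guaranteed by (\ref{BB6}), the integrand $\tfrac{1}{2}s|\vc{v}-\vc{U}|^2+P(s)-P'(r)(s-r)-P(r)$ is nonnegative and vanishes only at $(s,\vc{v})=(r,\vc{U})$, which forces $\nu_{\tau,x}=\delta_{(r(\tau,x),\vc{U}(\tau,x))}$ for a.a.\ $(\tau,x)$ and $\mathcal{D}\equiv 0$.
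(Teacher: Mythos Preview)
Your proposal is correct and follows essentially the same route as the paper: apply (\ref{RE5}) with the strong solution as test pair, reduce to (\ref{RE9}), split the compatibility term via the cutoff $\psi$ into the three density regimes, invoke (\ref{KoPo}) to absorb the low-density remainder, and close with Gronwall. One small slip: the pressure excess $p(s)-p'(r)(s-r)-p(r)$ is \emph{not} nonnegative in general (hypothesis (\ref{BB6}) gives $p'>0$ but not $p''\geq 0$); fortunately you do not use this, and the domination by the integrand of $\mathcal{E}_{mv}$---which is the only estimate actually needed, since the term carries the signed factor $\Div\vc{U}$---is correct.
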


\color{black}

\section{Examples of problems generating measure-valued solutions}
\label{E}

Besides the model of Brenner discussed in Section \ref{I},
there is a vast class of problems -- various approximations of the barotropic Navier-Stokes system (\ref{I1}--\ref{I3}) -- generating (dissipative) measure-valued solutions.
Below, we mention three examples among many others.

\subsection{Artificial pressure approximation}

The theory of weak solutions proposed by Lions \cite{LI4} and later developed in \cite{FNP} does not cover certain physically interesting cases. For the sake of simplicity,
consider the pressure $p$ in its iconic form
\[
p(\vr) = a \vr^\gamma,\ a > 0,\ \gamma \geq 1
\]
obviously satisfying (\ref{BB6}). The existence of weak solutions is known in the following cases:
\[
N = 2,\ \gamma \geq 1\  \mbox{and }\ N = 3,\ \gamma > \frac{3}{2}.
\]
Note that the critical case $\gamma = 1$ for $N = 2$ was solved only recently by Plotnikov and Weigant \cite{PloWei}.

This motivates the following approximate problem
\begin{eqnarray}
\label{AI1}
\partial_t \vr + \Div (\vr \vu) &=& 0,  \\
\label{AI2}  \partial_t (\vr \vu) + \Div (\vr \vu \otimes \vu) + \Grad p(\vr) + \delta \Grad \vr^\Gamma &=& \Div \mathbb{S} (\Grad \vu), \\
\label{AI3} \vu|_{\partial \Omega} &=& 0,
\end{eqnarray}
where $\delta > 0$ is a small parameter and $\Gamma > 1$ is large enough to ensure the existence of weak solutions.

Repeating the arguments applied in Section \ref{BM} to Brenner's model, it is straightforward to check that a family of weak solutions $\{ \vr_\delta, \vu_{\delta} \}_{\delta > 0}$,
satisfying the energy inequality,
generates a dissipative measure-valued solution in the sense of Definition \ref{DD1}. Indeed it is enough to observe that
\[
p(\vr) + \delta \vr^\Gamma \leq c \left( P(\vr) + \frac{\delta}{\Gamma - 1} \vr^{\Gamma} \right) \ \mbox{for all}\ \vr \geq 1,
\]
where the constant is uniform with respect to $\delta \to 0$.

We conclude that the weak solutions of the problem with vanishing artificial pressure generate a dissipative measure-valued solution. In particular, as a consequence of
Theorem \ref{TT1}, they converge to the (unique) strong solution provided it exists and the initial data are conveniently adjusted. We remark that strong
solutions to the barotropic Navier-Stokes system exist at least locally in time provided
\begin{itemize}
\item
the pressure is a sufficiently smooth function of $\vr$,
\item
the domain $\Omega$ has a regular boundary, and
\item
the initial data are smooth enough and satisfy the necessary compatibility conditions as the case may be,
\end{itemize}
see e.g.\ Cho, Choe and Kim \cite{ChoChoeKim}, Valli and Zaj{\c{a}}czkowski \cite{VAZA}.

\subsection{Multipolar fluids}

The theory of multipolar fluid was developed by Ne\v cas and \v Silhav\' y \cite{NESI} in the hope to develop an alternative approach to regularity for
compressible fluids. The problems may take various forms depending on the shape of the viscous stress
\[
\mathbb{T}(\vu, \Grad \vu, \ \Grad^2 \vu, \dots) = \mathbb{S}( \Grad \vu) + \delta \sum_{j = 1}^{k-1} \left( (-1)^j \mu_j \Delta^j
(\Grad \vu + \Grad^t \vu) + \lambda_j \Delta^j \Div \vu \ \mathbb{I} \right) + \ \mbox{non-linear terms.}
\]
The resulting system has a nice variational structure and, for $k$ large enough, admits global in time smooth solutions, see Ne\v cas, Novotn\' y and \v Silhav\' y \cite{NeNoSil}.

It is natural to conjecture that the (smooth) solutions of the multipolar system will converge to their weak counterparts as $\delta \to 0$ at least in the cases where the
pressure complies with the requirements of Lions' theory. However, this is to the best of our knowledge an open problem. Instead, such a process may and does generate a
(dissipative) measure valued solutions at least for a certain class of boundary conditions studied in \cite{NeNoSil} that may be schematically written as
\[
\ \mbox{no-slip}\ \vu|_{\partial \Omega} = 0 \ +\   \mbox{natural boundary conditions of Neumann type.}
\]
Then the proof is basically the same as for Brenner's model.

\subsection{Numerical schemes}

Theorem \ref{TT1} may be useful in the study of convergence to certain dissipative numerical schemes for the barotropic Navier-Stokes system, meaning schemes preserving some form of the energy inequality. Such a scheme was proposed by Karlsen and Karper \cite{KarKar1}, and a rigorous proof of convergence to weak solutions finally was finally established by Karper \cite{Karp}. Karper's result applies to a certain class of pressures, notably
\[
p(\vr) = a \vr^\gamma \ \mbox{for}\ \gamma > 3 ,\ N =3.
\]
On the other hand, however, the consistency estimates cover a larger set for $\gamma > 3/2$, see Gallou{\"e}t et al.\ \cite{GalHerMalNov}. It can be shown that the consistency estimate
imply that the family of numerical solutions generate a (dissipative) measure-valued solution. In accordance with the conclusion of Theorem \ref{TT1}, the numerical solutions
will converge to a classical exact solution as soon as the latter exists. In fact this has been shown in \cite{FeHoMaNo} by means of a discrete analogue of the relative energy
inequality.

\section{Measure valued solutions with bounded density field}

We conclude our discussion by a simple example that indicates that the measure-valued solutions may be indeed an artifact of the theory as long as they emanate from sufficiently
regular initial data. The following result is a direct consequence of Theorem \ref{TT1} and a regularity criterion proved by
Sun, Wang, and Zhang \cite{SuWaZh1} stating that solutions of the barotropic Navier-Stokes system starting from smooth initial data remain smooth as long as their density component remains bounded. Since Theorem \ref{TT1} requires slightly better regularity than \cite{SuWaZh1}, we restrict ourselves to very regular initial data for which the necessary local existence result was proved in \cite[Proposition 2.1]{FeHoMaNo}.


\begin{Theorem} \label{TT2}

In addition to the hypotheses of Theorem \ref{TT1}, suppose that $\mu > 0$, $\eta = 0$, and
$\{\nu_{t,x},\mathcal{D}\}$ is a dissipative measure-valued solution to the barotropic Navier-Stokes system in $(0,T) \times \Omega$ emanating from smooth data, specifically,
\[
\nu_{0,x} = \delta_{[r_0(x), \vc{U}_0(x)]} \ \mbox{for a.a.}\ x  \in \Omega,
\]
where
\[
r_0 \in C^3(\Ov{\Omega}),\ r_0 > 0, \ \vc{U}_0 \in C^3(\Ov{\Omega}),\ \vc{U}_0|_{\partial \Omega} = 0, \ \Grad p(r_0) = \Div \mathbb{S} (\Grad \vc{U}_0).
\]
Suppose that the measure valued solution $\nu_{t,x}$ has bounded density component, meaning the support of the measure $\nu_{t,x}$ is confined to a strip
\[
0 \leq s \leq \Ov{\vr}\ \mbox{for a.a}\ (t,x) \in (0,T) \times \Omega.
\]

Then $\mathcal{D}=0$ and $\nu_{t,x}= \delta_{[ r(\tau,x), \vc{U}(\tau,x) ]} \ \mbox{for a.a.}\ \tau \in (0,T),\ x \in \Omega$, where
$[r, \vc{U}]$ is  a classical smooth solution of the barotropic Navier-Stokes system in $(0,T) \times \Omega$.

\end{Theorem}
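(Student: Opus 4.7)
The plan is to combine the local existence of a classical solution from \cite[Proposition 2.1]{FeHoMaNo} with the weak-strong uniqueness result (Theorem \ref{TT1}) and bootstrap via the Sun--Wang--Zhang regularity criterion \cite{SuWaZh1}. The role of the boundedness of the density component of $\nu_{t,x}$ is to feed into the blow-up criterion, which then ensures that the local smooth solution extends to the whole interval $(0,T)$.

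First, I would invoke \cite[Proposition 2.1]{FeHoMaNo}: thanks to the $C^3$ regularity of the initial data $r_0, \vc{U}_0$, together with the compatibility condition $\Grad p(r_0) = \Div \mathbb{S}(\Grad \vc{U}_0)$, there exists $T^* \in (0,T]$ and a unique classical solution $[r,\vc{U}]$ of (\ref{I1}--\ref{I3}) on $(0,T^*)\times\Omega$ of the regularity class required by Theorem \ref{TT1}, namely with $r, \Grad r, \vc{U}, \Grad \vc{U}$ continuous on $[0,T^*]\times\Ov{\Omega}$, $r>0$, and $\partial_t \vc{U} \in L^2(0,T^*;C(\Ov{\Omega};R^N))$. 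Let $T^*$ be the maximal such time.

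Next, I would apply Theorem \ref{TT1} on the interval $(0,T^*)$. Since the initial data of the measure-valued solution is precisely $\delta_{[r_0,\vc{U}_0]}=\delta_{[r(0,\cdot),\vc{U}(0,\cdot)]}$, the right-hand side in the stability estimate of Theorem \ref{TT1} vanishes, so that $\mathcal{D}\equiv 0$ on $(0,T^*)$ and
\[
\nu_{t,x}=\delta_{[r(t,x),\vc{U}(t,x)]}\quad\mbox{for a.a.\ }(t,x)\in(0,T^*)\times\Omega.
\]

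Now comes the bootstrap. Suppose, for contradiction, that $T^*<T$. Because $\nu_{t,x}$ is atomic on $(0,T^*)\times\Omega$ and its density support is confined to $[0,\Ov{\vr}]$, we obtain $0\le r(t,x)\le\Ov{\vr}$ uniformly on $(0,T^*)\times\Omega$. At this point I would appeal to the regularity criterion of Sun, Wang, and Zhang \cite{SuWaZh1}, which states that the smooth solution of the barotropic Navier-Stokes system cannot break down at $T^*$ as long as the density remains bounded. Hence $[r,\vc{U}]$ extends as a classical solution beyond $T^*$, in the same regularity class, contradicting the maximality of $T^*$. Therefore $T^*=T$, and the conclusion of the theorem follows from the identification established via Theorem \ref{TT1}.

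The delicate point, and the one I would be most careful about, is matching regularity scales. The blow-up criterion in \cite{SuWaZh1} is formulated for a certain class of strong solutions (typically Hoff/Serrin type), while Theorem \ref{TT1} requires the slightly stronger pointwise continuity of $r,\Grad r,\vc{U},\Grad\vc{U}$ and $L^2_t L^\infty_x$-regularity of $\partial_t \vc{U}$. This is exactly why the hypotheses restrict to $C^3$ initial data with the compatibility condition: the local existence theory of \cite[Proposition 2.1]{FeHoMaNo} then propagates enough regularity so that the extension provided by \cite{SuWaZh1} stays within the class admissible for Theorem \ref{TT1}. Verifying this compatibility between the two regularity frameworks is the main technical obstacle, but once it is in place the continuation-by-contradiction argument closes immediately.
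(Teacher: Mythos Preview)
Your proposal is correct and follows essentially the same route as the paper: invoke local existence from \cite[Proposition 2.1]{FeHoMaNo}, apply Theorem \ref{TT1} to identify the measure-valued solution with the classical one on its life span, then use the bounded density component together with the Sun--Wang--Zhang criterion \cite{SuWaZh1} to continue the classical solution up to $T$. Your additional remarks on matching regularity scales are exactly the caveat the paper flags before stating the theorem.
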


\begin{Remark}
Note that  $\Grad p(r_0) = \Div \mathbb{S} (\Grad \vc{U}_0)$ is the standard compatibility condition associated to~\eqref{I3}.
\end{Remark}

\color{black}

\bProof

As stated in \cite[Proposition 2.1]{FeHoMaNo}, the compressible Navier-Stokes system (\ref{I1}--\ref{I3}) endowed with the initial data $[r_0, \vc{U}_0]$ admits a local in time
classical solutions fitting the regularity class required in Theorem \ref{TT1}. Thus the measure-valued solution coincides with the classical one on its life span. However, as the density component is bounded, the result of Sun, Wang, and Zhang \cite{SuWaZh1} asserts that the classical solution can be extended up to the time $T$.

\qed

\begin{Remark} \label{rrem}

The assumption that the bulk viscosity $\eta$ vanishes is a technical hypothesis used in~\cite{SuWaZh1}.

\end{Remark}

\def\cprime{$'$} \def\ocirc#1{\ifmmode\setbox0=\hbox{$#1$}\dimen0=\ht0
  \advance\dimen0 by1pt\rlap{\hbox to\wd0{\hss\raise\dimen0
  \hbox{\hskip.2em$\scriptscriptstyle\circ$}\hss}}#1\else {\accent"17 #1}\fi}


\end{document}